\title[Large Level Sets]{Dimension of Images of Large Level Sets}
\author{Gavin Armstrong}
\address[GA]{ Mathematics Department,
Central Washington University,
400 E University Way,
Ellensburg,
WA 98926, USA}
\email[GA]{Gavin.Armstrong@cwu.edu}
\author{Anthony G. O'Farrell}
\address[AOF]{Mathematics and Statistics, Maynooth University,
Co Kildare, W23 HW31, Ireland}
\email[AOF]{anthony.ofarrell@mu.ie}
\date{\today\ \hhmmsstime}
\newcommand\ignore[1]{}
\newcommand{\dist}{\hbox{dist}}
\newcommand{\R}{\mathbb{R}}
\newcommand{\N}{\mathbb{N}}
\newcommand{\Hs}{\mathscr{H}}
\newcommand{\diam}{\textup{diam}}
\newcommand{\Lip}{\textup{Lip}}
\newcommand{\preimy}{f^{-1}(y)}
\theoremstyle{plain} 
\newtheorem{theorem}{Theorem}
\newtheorem{lemma}{Lemma}
\theoremstyle{definition}
\keywords{smooth functions, level sets, Hausdorff dimension}
\subjclass[2000]{Primary: 26A06, 28A78, Secondary: 37E05, 28A80} 
\begin{document}

\begin{abstract}
Let $k$ be a natural number.
We consider $k$-times continuously-differentiable 
real-valued functions
$f:E\to\R$, where $E$ is some interval on the line
having positive length.  For $0<\alpha<1$ let 
$I_\alpha(f)$ denote the set of values $y\in\R$
whose preimage $f^{-1}(y)$ has Hausdorff dimension
$\dim f^{-1}(y) \ge \alpha$.  We consider how large can be
the Hausdorff dimension of $I_\alpha(f)$,
as $f$ ranges over the set $C^k(E,\R)$
of all $k$-times continuously-differentiable functions
from $E$ into $\R$.  We show that the sharp upper bound
on $\dim I_\alpha(f)$ is $\displaystyle\frac{1-\alpha}k$.
\end{abstract}

\maketitle

\section{Introduction}\label{S:1}

\subsection{Big level sets}

Let $f$ be a continuously-differentiable real-value function
of a real variable defined on some interval $I\subset\R$,
and consider its level sets $f^{-1}(y)$, for $y\in\R$.

In everyday experience of nice smooth functions
mapping $\R\to\R$, one
encounters only level sets that are discrete. 
But pathology
is not far below the surface.  For instance,
for an arbitrary closed set $F\subset\R$,
the function $f:x\mapsto \dist(x,F)^{k+1}$ 
is $k$-times continuously-differentiable on $\R$
and has $f^{-1}(0)=F$.  It is not difficult to
construct examples for which infinitely many
level sets are as large and wild as one might wish.       

For how many $y$ could 
$f^{-1}(y)$ be `large'?  This depends on the meaning of
`large'.  Clearly, at most a countable number of level sets may
have positive length (i.e. positive one-dimensional Lebesgue
measure).  Simple examples such as the sine function
show that all the nonempty level sets
may be infinite.  What about interpretations of `large'
intermediate between `infinite' and of `positive length'?

\subsection{Uncountable level sets}\label{SS:1-1}
Consider the set
$$ I_u(f):= \{ y\in\R: f^{-1}(y) \textup{ is uncountable}\}.
$$ 
It may certainly happen that $I_u(f)$ is
itself uncountable. For instance, the first coordinate
of the map constructed in \cite{OF} is a $C^\infty$
function and has an uncountable number of
uncountable level sets. 
For $y\in I_u(f)$, the closed set $\preimy$ 
must contain accumulation points, and these points are
critical points of $f$, so by a well-known result of
 Morse and Sard 
\cite{Morse}, $I_u(f)$
has length zero.  In fact, for small $\delta>0$,
the intersection of 
$I_u(f)$ with any bounded interval
may be covered by O$(1/\delta)$ intervals
of length o$(\delta)$. If $f$ is smoother, say $k$ times
continuously-differentiable, then $f$ is `flat to order
$k$' (i.e. all derivatives up to order $k$ are zero)
at each accumulation point of $\preimy$ for
$y\in I_u(f)$, and it follows easily that
$I_u(f)$ has Hausdorff dimension at most
$1/k$. For background on Hausdorff measures and dimension,
and fractals, see 
\cite{Rogers, Mattila, Federer, Falconer, Bishop+Peres,
Mattila+Mauldin}. 

\subsection{Level sets of positive dimension}
To explore beyond the merely uncountable, let us
consider critical image sets of positive Hausdorff dimensions.
We denote by $\dim S$ the Hausdorff dimension of a set
$S\subset\R$.  For $0<\alpha\le1$, consider the set
$$ I_\alpha(f):= \{ y\in\R: \dim f^{-1}(y) \ge \alpha\}. $$ 
How large could $I_\alpha(f)$ be?

The most natural measure of its size is
its Hausdorff dimension:
$$ L(\alpha, f) := \dim I_\alpha(f). $$ 
This is some number in the interval $[0,1]$.
It is nonincreasing as $\alpha$ increases:
\begin{equation}
\label{E:1}
 \alpha_1 < \alpha_2 \implies L(\alpha_1,f) \ge L(\alpha_2,f). 
\end{equation}

We use $C^k$ to stand for `$k$ times continuously differentiable',
and $C^k(E,\R)$ to denote the set of all $C^k$
functions from an interval $E$ into $\R$.

Our main result is the following:

\begin{theorem}\label{T:1} Let $k\in \N$ and $0<\alpha<1$,
and let $E$ be a nonempty open interval on $\R$.
Then 
$$ \sup\left\{ L(\alpha,f): f\in C^k(E,\R) \right\} 
= \frac{1-\alpha}{k}. $$ 
\end{theorem}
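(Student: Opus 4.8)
The plan is to prove the two inequalities $\sup L(\alpha,f)\le\frac{1-\alpha}{k}$ and $\sup L(\alpha,f)\ge\frac{1-\alpha}{k}$ separately. For the upper bound, fix $f\in C^k(E,\R)$; since Hausdorff dimension is stable under countable unions it suffices to work on a compact subinterval $[a,b]\subset E$. Let $Z:=\{x\in[a,b]:f'(x)=\cdots=f^{(k)}(x)=0\}$ be the set where $f$ is flat to order $k$. For $y\in I_\alpha(f)$ I decompose $\preimy\cap[a,b]$ into its perfect kernel $P$ and a countable remainder; since $\dim P=\dim(\preimy\cap[a,b])\ge\alpha$ and every point of $P$ is an accumulation point of $\preimy$, the discussion of \S\ref{SS:1-1} places $P\subseteq Z$, so $\dim(Z\cap\preimy)\ge\alpha$. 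The second ingredient is a quantitative flatness estimate: by Taylor's theorem, for any interval $U$ of length $\delta$ meeting $Z$ one has $\diam f(U)\le C\,\eta(\delta)\,\delta^{k}$, where $\eta(\delta):=\sup\{|f^{(k)}(\xi)|:\dist(\xi,Z)\le\delta\}\to0$ as $\delta\to0$ by uniform continuity of $f^{(k)}$ together with its vanishing on $Z$.

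A naive cover of $f(Z)$ by the images of the $\sim\delta^{-1}$ domain intervals of length $\delta$ meeting $Z$ yields only $\dim f(Z)\le 1/k$, recovering the bound of \S\ref{SS:1-1} but ignoring $\alpha$. To exploit the fibre size I would stratify by a Frostman constant: for $\beta<\alpha$ write $I_\alpha(f)=\bigcup_{M}B_M$, where each $y\in B_M$ carries a probability measure $\mu$ on $Z\cap\preimy$ with $\mu(I)\le M|I|^{\beta}$ for all intervals $I$ (such $\mu$ exists by Frostman's lemma, since $\dim(Z\cap\preimy)\ge\alpha>\beta$ forces $\Hs^{\beta}(Z\cap\preimy)=\infty$). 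At scale $r$ each fibre over $y\in B_M$ then meets at least $M^{-1}r^{-\beta}$ of the $\le Cr^{-1}$ domain intervals of length $r$ meeting $Z$; since each such interval has image of length $\le C\eta(r)r^{k}$, a bounded-overlap count shows that the number of image cells of length $\rho=C\eta(r)r^{k}$ meeting $B_M$ is $O(r^{\beta-1})$. Hence $\Hs^{t}_\infty(B_M)\le C\,M\,\eta(r)^{t}\,r^{\beta-1+kt}$, and the choice $t=\frac{1-\beta}{k}$ annihilates the power of $r$ and leaves the factor $\eta(r)^{t}\to0$; letting $r\to0$ gives $\Hs^{t}(B_M)=0$. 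Thus $\dim I_\alpha(f)\le\frac{1-\beta}{k}$ for every $\beta<\alpha$, and letting $\beta\uparrow\alpha$ completes the upper bound. I expect this two-sided (domain and image) scale-$r$ counting, together with the Frostman stratification needed to make it uniform across fibres, to be the main obstacle.

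For sharpness I would exhibit, for each large integer $n$, an explicit self-similar $f_n\in C^k(E,\R)$ with $L(\alpha,f_n)$ arbitrarily close to $\frac{1-\alpha}{k}$; transplanting a construction on $[0,1]$ into a subinterval of $E$ and extending flatly costs nothing. Partition the domain into $n$ equal pieces and the target into $n^{k}$ equal pieces, so that the length ratio $n^{-k}=(n^{-1})^{k}$ is exactly compatible with order-$k$ flatness. Select $a=\lfloor n/b\rfloor$ surviving target pieces, each covered $b=\lceil n^{\alpha}\rceil$ times by domain pieces (so that $ab\le n$), and define $f_n$ on each domain piece to be a rescaled copy of a fixed $C^{k}$ step that is flat to order $k$ at both endpoints, iterating the same pattern inside every surviving target piece. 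The limiting target Cantor set $B$ then has $\dim B=\frac{\log a}{k\log n}\to\frac{1-\alpha}{k}$, while the fibre over each $y\in B$ contains a self-similar set of dimension $\frac{\log b}{\log n}\ge\alpha$, so $B\subseteq I_\alpha(f_n)$ and $L(\alpha,f_n)\ge\frac{\log a}{k\log n}$.

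The delicate point in the construction is verifying that the piecewise definition assembles into a genuinely $C^{k}$ function on all of $E$. The order-$k$ flatness at every breakpoint glues the pieces, and the scaling (a $j$-th derivative on a level-$m$ piece is $O(n^{(j-k)m})$ for $j\le k$) yields uniform bounds and continuity of $f_n^{(k)}$, which must vanish on the Cantor set. This smoothness verification, together with the accompanying Moran-type dimension computations for $B$ and for the fibres, is where I expect the real work of the lower bound to lie.
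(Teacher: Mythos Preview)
Your upper bound argument is sound and proceeds differently from the paper. The paper argues by contradiction via a coarea-type inequality: restricting $f$ to the set $T$ of condensation points of its critical set, one has $\diam f(S)=o((\diam S)^k)$, and then Federer's Theorem~2.10.25 (with the Lipschitz bound replaced by this H\"older-type bound) yields
\[
\int_Y^* \Hs^\gamma\big(T\cap\preimy\big)\,d\Hs^\beta y \;\le\; C\,\Hs^{\gamma+k\beta}(T)=0
\]
whenever $\gamma+k\beta>1$, which contradicts $\Hs^\beta(Y)>0$ and $\Hs^\gamma(\preimy\cap T)>0$. Your Frostman-stratification plus double counting of (domain interval, image cell) incidences achieves the same arithmetic $\beta-1+kt=0$ by elementary means, avoiding the Federer machinery; the paper's route is shorter once one accepts the cited theorem, while yours is more self-contained.

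Your lower bound, however, has a genuine gap. With domain pieces of length $n^{-m}$ and target pieces of length $n^{-km}$, the rescaled step on any piece (used or linking) has $k$-th derivative of order $n^{-km}/(n^{-m})^{k}=1$, as you yourself note. This bound does \emph{not} decay with $m$, so along a sequence approaching a point $x$ of the Cantor set $A$ you will find points where $|f^{(k)}|$ is bounded away from $0$. But at $x$ itself $f$ must be flat to order $k$ (since $x$ is an accumulation point of its level set), forcing $f^{(k)}(x)=0$; hence $f^{(k)}$ is discontinuous at $x$ and $f\notin C^k$. Your construction produces only a $C^{k-1,1}$ function. The remedy, which is exactly what the paper does, is to take target heights $a_n$ scaled like $d_n^{\,k+\epsilon}$ rather than $d_n^{\,k}$ (equivalently, $n^{-(k+\epsilon)m}$ in your notation); this forces $|f^{(k)}|\to 0$ near $A$, and the sup in the theorem is then recovered by letting $\epsilon\downarrow 0$ at the end. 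You anticipated that the smoothness verification would be the crux, and indeed it is --- but the specific scaling you chose is precisely the borderline case that fails.
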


Observe that it suffices to prove this result for
any one particular nonempty open interval $E$. We shall
prove it for $E=(-2,2)$.

\subsection{Structure of the paper}
The rest of the paper is organised as follows.
In Section \ref{S:2} we show that
\begin{equation}\label{E:2}
 \sup\left\{ L(\alpha,f): f\in C^k(E,\R) \right\} 
\le \frac{1-\alpha}{k}, 
\end{equation} 
 and in Section \ref{S:3} we show that
\begin{equation}\label{E:3}
 \sup\left\{ L(\alpha,f): f\in C^k(E,\R) \right\} 
\ge \frac{1-\alpha}{k}. 
\end{equation} 

\subsection{Wider Context}
There has been substantial interest in the
possible pathology of level sets of continuous functions.
These have been studied for functions mapping
$\R^m\to\R^n$ and more generally for functions
on and to more general topological spaces.  

Notorious examples of \emph{worst-case behaviour} have
been known for a long time. For example, the
components $f_j$ ($j=1,2$) of a Peano space-filling curve
$f=(f_1,f_2):\R\to\R^2$
obviously have an interval of values $y$ each
having a nonempty perfect set contained in $f^{-1}(y)$.
The existence of such curves in suitable H\"older classes
has been explored \cite{Buckley}.
Components of H\"older-continuous arcs passing through
product Cantor sets (similar to the example in
\cite{Garnett}) can achieve level sets of any
dimension less than 1 whose combined image has
any dimension less than 1. Further the space
$C([0,1],\R)$ contains a dense (residual) set of
functions for each of which all but a countable number of
the nonempty level sets are perfect
\cite[Theorem 2]{Garg}.  

Generically, functions $f\in C([0,1],\R)$ have level sets
of Hausdorff dimension zero \cite{Kirchheim1995}.
The survey \cite{Balka2016B} 
summarises and extends work on \emph{generic behaviour},
in various senses (Baire category, prevalence) with respect to
the usual topology on various spaces of continuous functions
$C(X,Y)$,
and focussing on the Hausdorff dimension (and other
types of dimension) of level sets. This line of
investigation goes back to Bruckner and Garg
\cite{Garg, Bruckner+Garg, Bruckner, Bruckner+Petrushka, Garg}.
See also \cite{Balka2012, Balka2014, Balka2016A, Balka2017,
Kirchheim1993, Kirchheim1992, Kirchheim1995,
Elekes, Dougherty}.
This
activity forms part of a wider theme concerning
the pathology of image sets, graphs of functions,
and slices.
See in addition \cite{Mauldin+Williams, Orponen}.  

Special kinds of functions, such as the Hardy-Weierstrass
nowhere-differentiable trigonometric series have been
studied intensively \cite{Falconer, Hunt, Shen, Buckley}.
Beyer \cite{Beyer} refined results of Kaczmarz and Steinhaus and
established facts about the
Hausdorff dimension of level sets of certain Rademacher series.
Bertoin \cite{Bertoin} computed the Hausdorff dimension of
level sets of a class of self-affine functions, building on
work of Kono on occupation densities \cite{Kono1986, Kono1988}.

Continuously-differentiable and highly-differentiable
functions have attracted less attention from the
point of view of pathological level sets, although the existence
of pathological non-generic behavior is notorious from
other aspects, such as $C^\infty$ dimension \cite{OF}, 
the structure
of critical sets \cite{Allan},
dynamics (iteration) 
and conjugation and centralisers in the case
of invertible maps, even in dimension one \cite{Roginskaya}. 
This behaviour
places formidable obstacles in the way of those
who attempt to solve open problems about general   
smooth functions. The many large level sets that appear
in our main result are not found for generic 
functions in $C^k(E,\R)$, which of course have zero-dimensional
level sets. The example functions
we construct in this paper become completely tame, with discrete level sets,
if we add a small linear perturbation, replacing
$f(x)$ by $f(x)+\epsilon x$.
 
The phenomenon exposed in the present paper may
prompt others to investigate questions about the
pathological behaviour of level sets of smooth functions
in other dimensions, and using other senses of
`large', (both for the level sets themselves and for the
image of the union).  The work mentioned above
about continuous functions involved interpretations of
`large' that are topological (such as Baire category),
metric (such as various kinds of Hausdorff dimension)
and measure-theoretic (related to Haar measure).
This could be a large investigation.
We content ourselves with the present result,
which just uses classical concepts in the simplest
context. Sufficient unto the day is the generality thereof.

\section{Upper Bound for $L(\alpha,f)$}
\label{S:2}
\subsection{}
To prove \eqref{E:2}, we have
to prove the following:

\begin{lemma} For $k\in\N$, $0<\alpha<1$ and
$f\in C^k(E,\R)$, we have
$$ L(\alpha,f) \le \frac{1-\alpha}k. $$
\end{lemma}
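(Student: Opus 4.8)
The plan is to bound the \emph{upper box-counting dimension} of $I_\alpha(f)$ directly by an explicit two-scale covering, and then pass to Hausdorff dimension using countable stability. Since $\dim_H\le\overline{\dim}_B$, and since both $E$ and the target line decompose into countable unions, I would first reduce to a compact subinterval $[a,b]\subset E$ on which $|f^{(k)}|\le M<\infty$; by countable stability of Hausdorff dimension it suffices to bound $\dim\{y:\dim(\preimy\cap[a,b])\ge\alpha\}$. Write $G=\{x\in[a,b]:f'(x)=\cdots=f^{(k-1)}(x)=0\}$ for the set of points flat to order $k-1$. A repeated Rolle argument shows that every accumulation point of a level set lies in $G$; since the isolated points of $\preimy$ form a countable (hence zero-dimensional) set, for every $y\in I_\alpha(f)$ we get $\dim(\preimy\cap G)\ge\alpha$. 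The role of $G$ is the Taylor estimate: if $x_0\in G$ and $|x-x_0|\le\delta$, then $|f(x)-f(x_0)|\le \frac{M}{k!}\delta^k$, so the image of any $\delta$-interval meeting $G$ has diameter $\le C\delta^k$.

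This flatness is what links the two scales. I would grid $[a,b]$ into intervals of length $\delta$ and the range into intervals of length $\eta:=C\delta^k$. Fix $s<\alpha$. For a value $y$ with $\Hs^s_\infty(\preimy\cap G)\ge c$, a standard content estimate shows that at least $c\,\delta^{-s}$ of the domain $\delta$-intervals meet $\preimy\cap G$; each such interval meets $G$, so its image has diameter $\le\eta$ and hence lies within $O(1)$ of the range $\eta$-intervals surrounding $y$. Conversely, each domain $\delta$-interval is charged in this way to only $O(1)$ range intervals. Double counting --- comparing the total number $\le C\delta^{-1}$ of domain intervals against the $c\,\delta^{-s}$ forced by each range interval meeting $I_\alpha$, with bounded overlap --- gives that the number $M_\eta$ of $\eta$-intervals meeting $I_\alpha$ satisfies $M_\eta\le C'\delta^{-(1-s)}$. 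Since $\delta\asymp\eta^{1/k}$, this reads $M_\eta\le C''\eta^{-(1-s)/k}$, so $\overline{\dim}_B\le (1-s)/k$ on this piece.

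To make ``$\Hs^s_\infty(\preimy\cap G)\ge c$'' uniform I would stratify $I_\alpha(f)$ as the countable union over $n$ of the sets $\{y:\Hs^s_\infty(\preimy\cap G)\ge 1/n\}$ (using that $s<\alpha\le\dim(\preimy\cap G)$ forces positive content), prove the box bound $(1-s)/k$ on each stratum with $c=1/n$, and then invoke countable stability of Hausdorff dimension to conclude $\dim_H I_\alpha(f)\le (1-s)/k$. Letting $s\uparrow\alpha$ yields the claimed bound $\frac{1-\alpha}{k}$.

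The routine ingredients (Rolle, Taylor, the content-to-counting estimate, and the inequality $\dim_H\le\overline{\dim}_B$) are standard; the main obstacle is organising the double-counting step cleanly. Upper box dimension is \emph{not} countably stable, so the stratification must be arranged so that the box estimate holds on each stratum separately with an explicit constant, and the bounded-overlap bookkeeping --- that the $\delta$-intervals forced by distinct range intervals overlap only boundedly, so that summing their counts recovers the total $C\delta^{-1}$ rather than overcounting --- must be verified with care. This is precisely the step that converts the pointwise fibre estimate $\dim\ge\alpha$ into the global factor $1-\alpha$, with the exponent $k$ supplied by the flatness.
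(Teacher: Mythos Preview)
Your argument is correct and complete in outline; the double-counting step works exactly as you describe, since a domain $\delta$-interval $I$ meeting $G$ has $\diam f(I)\le 2(M/k!)\delta^k=O(\eta)$, so the $y_i$'s (chosen one per range $\eta$-interval) for which $I$ meets $f^{-1}(y_i)\cap G$ all lie in $f(I)$ and hence in at most three range intervals. The stratification by Hausdorff content, followed by $\dim_H\le\overline{\dim}_B$ on each stratum and countable stability of $\dim_H$, is also sound. One small wrinkle you may want to make explicit: the reduction to compact $[a,b]\subset E$ does not give $I_\alpha(f)=\bigcup_n\{y:\dim(\preimy\cap[a_n,b_n])\ge\alpha\}$ exactly, but only $I_\alpha(f)\subset\bigcup_n\{y:\dim(\preimy\cap[a_n,b_n])\ge\alpha'\}$ for each $\alpha'<\alpha$; since you already let $s\uparrow\alpha$ at the end, this is absorbed.

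Your route is genuinely different from the paper's. The paper argues by contradiction: assuming $L(\alpha,f)>\tfrac{1-\alpha}{k}$, it picks $\beta$ between these, invokes Davies' theorem to extract a compact $Y\subset I_\alpha(f)$ with $0<\Hs^\beta(Y)<\infty$, passes to the set $T$ of condensation points of the critical set (on which $f$ is flat to order $k$, giving a little-$o$ H\"older estimate), and then applies a variant of Federer's coarea-type inequality (2.10.25 in \cite{Federer}) with the Lipschitz hypothesis replaced by a $k$-th-power diameter bound. This yields $\int_Y^*\Hs^\gamma(\preimy)\,d\Hs^\beta y\le C\,\Hs^{\gamma+k\beta}(X)$ for suitable $\gamma<\alpha$ with $\gamma+k\beta>1$; since $X\subset\R$ the right side vanishes while the left is positive, a contradiction. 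The paper's approach is slicker once one has Davies and Federer to hand, and its integral-geometric formulation points the way to higher-dimensional generalisations; your approach is entirely elementary (no Davies, no Federer, only Rolle, Taylor, and pigeonhole), self-contained, and gives an explicit covering bound for each stratum rather than a measure-theoretic contradiction. Both prove the same sharp inequality; yours trades machinery for a hands-on combinatorial count.
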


\begin{proof}
Suppose this lemma is false.
Then we can choose some $k\in\N$, $0<\alpha<1$,
 and $f\in C^k(E,\R)$ such that
$$ L(\alpha,f) > \frac{1-\alpha}k. $$
Fix a number $\beta$ strictly between
$L(\alpha,f)$ and 
$\frac{1-\alpha}k$. Then
$\Hs^\beta( I_\alpha(f) )$ is positive,
where $\Hs^\beta$ denotes Hausdorff measure
of dimension $\beta$.  
By a theorem
of Davies \cite[2.10.47]{Federer} (see also \cite{Howroyd}), 
we may choose a compact 
subset $Y\subset I_\alpha(f)$ having
finite positive $\Hs^\beta(Y)$. 

We have $\alpha+k\beta>1$, so we can choose
a positive number $\gamma<\alpha$ such that
$\gamma+k\beta>1$.  Then for each
$y\in Y$, the preimage $f^{-1}(y)$
has positive $\Hs^\gamma$ measure
(infinite, in fact).

Let $T$ denote the set of condensation points
of the critical set $(f')^{-1}(0)$ of $f$. 
Then $T$ is a closed subset of the interval $E$,
and
since $f$ is
flat of order $k$ at each point of $T$, we have
that 
$$ |f(x_1)-f(x_2)| = \textup{o}\left(|x_1-x_2|^k\right), (x_1,x_2\in T).$$
For each $y\in Y$, the set 
$\preimy\setminus T$ of non-condensation points 
of its preimage is countable, so
$$ \Hs^\gamma(\preimy\cap T) = \Hs^\gamma(\preimy)>0.$$
Let $X=T\cap f^{-1}(Y)$. 

We now start from Federer \cite{Federer}, section
2.10.25.  The theorem there states:

\begin{theorem}\label{T:2}
If $f:X\to Y$ is a Lipschitzian map of metric
spaces, $A\subset X$, $0\le k<\infty$ and $0\le m<\infty$,
then
$$ \int_Y^* \Hs^k\left( A\cap\preimy \right) d\Hs^m y
\le (\Lip f)^m 
\frac{\alpha(k)\alpha(m)}{\alpha(k+m)}
\Hs^{k+m}(A), $$
provided either $\{y: \Hs^k(A\cap\preimy)>0\}$ is
the union of a countable family of sets with finite
$\Hs^m$ measure, or $Y$ is boundedly compact.
\qed
\end{theorem}

Here $\int^*$ denotes the upper integral,
$\Lip f$ is the Lipschitz constant of $f$,
and $\alpha(k)$ is the constant
$$ \alpha(k):= \frac{\Gamma(\frac12)^k}{\Gamma(\frac k2 +1)}, $$
where $\Gamma$ denotes the Euler Gamma function.

\ignore{
\subsection{Parenthetical Remark:} When working through this material 
in Federer's class
in 1969-70, I made a marginal note of his statement
that the proviso beginning ``provided$\ldots$" 
in the statement of Theorem \ref{T:2}  is redundant
and may be removed (as may the similar assumption
in his 2.10.22 and 2.10.24). Presumably this is
published somewhere, possibly even in a later
edition, and it is desirable to track it down or otherwise
prove it. However, we shall just use it
(or more exactly, use its proof) as it stands. 
}

\subsection{}
In the proof of Theorem \ref{T:2} (on page 189),
Federer used
the hypothesis that $f$ is Lipschitzian in order to
bound the diameter of the image $f(S)$ of a small
set $S$ by a constant multiple of $\diam(S)$.
If instead we can bound $\diam f(S)$ (for, say,  
$\diam(S)\le1$) by
a constant times $(\diam S)^s$, for some $s>1$,
then we can improve
the inequality in Theorem \ref{T:2}
to
$$ \int_Y^* \Hs^k\left( A\cap\preimy \right) d\Hs^m y
\le \textup{const}\cdot
\Hs^{k+sm}(A). $$
(Just replace $m$ by $sm$ at each occurrence
in Federer's argument.)

We can apply this, with the $X$ and $Y$ chosen above,
and the replacements 
$$ k\to\beta, m\to\gamma, s\to k, A\to X$$
so that it reads
$$ \int_Y^* \Hs^\gamma\left( X\cap\preimy \right) d\Hs^\beta y
\le C
\Hs^{\gamma+k\beta}(X). $$
Thus
$$ \int_Y^* \Hs^\gamma\left( \preimy \right) d\Hs^\beta y
\le C
\Hs^{\gamma+k\beta}(X), $$
But $\gamma+k\beta>1$, and $X\subset\R$, so 
$$\int_Y^* \Hs^\gamma\left( \preimy \right) d\Hs^\beta y=0.$$
This is impossible, because the integrand is positive
(in fact $+\infty$) for all $y\in Y$, and
$\Hs^\beta(Y)>0$.

Thus the lemma is proven.
\end{proof}

\section{Examples of large $L(\alpha,f)$}
\label{S:3}

Fix $k\in\N$.

\subsection{}
In order to prove that the lower bound \eqref{E:3}
holds for each $\alpha$ strictly between $0$ and $1$,
it suffices to show by construction that
for each given $r$,$s$, and $\epsilon$, 
with $2\le r\in\N$, $2\le s\in\N$, and 
$$0<\epsilon< \frac1{rs-1},$$
there is a function $f\in C^k(E,\R)$
having
$$ L\left(
\frac{\log_s r}{1+\log_s\left(\frac{r}{1-\epsilon}\right)}
,f
\right)
\ge
\frac{1}{(k+\epsilon)\left\{
1+\log_s\left(\frac{r}{1-\epsilon}\right)
\right\}
}.
$$
To see this, observe first that the set of
all numbers 
$$
\alpha(r,s):=\displaystyle\frac{\log_sr}{1+\log_sr},
\ 
(2\le r\in\N, 2\le s\in\N)
$$ 
is dense in the interval $(0,1)$.  So
given any positive $\alpha<1$, and any $\eta>0$,
we can choose $r$ and $s$  so that
$$
\alpha+\eta > \alpha(r,s) > \alpha,
$$
and then choose $\epsilon>0$ so that
$$ \frac{1}{
(k+\epsilon)
\left\{
1+\log_s\left(\frac{r}{1-\epsilon}\right)
\right\}
} >
\frac1{k(1+\log_s r)} -\eta
=
\frac{1-\alpha(r,s)}k - \eta
,$$
and 
$$
\frac{\log_s r}{1+\log_s\left(\frac{r}{1-\epsilon}\right)}
>\alpha.
$$ 
Then for the function constructed we have
$$ L(\alpha,f) \ge L\left(
\frac{\log_s r}{1+\log_s\left(\frac{r}{1-\epsilon}\right)}
,f
\right)
>
\frac{1-\alpha-\eta}k -\eta. $$
Since this can be done for
each positive $\eta$, we conclude that
$$ \sup\{L(\alpha,f):f\in C^k(E,\R)\} \ge
\frac{1-\alpha}k,$$
as stated.

\subsection{}
Fix $r$,$s$ and $\epsilon$ with
$2\le r\in\N$, $2\le s\in\N$, $0<\epsilon<\frac12$.

The $C^k$ function $f$ we are going to construct
will map the interval $E=[-2,2]$ to $\R$,
will be $k$-flat on a certain Cantor-type set
$A\subset[0,1]$, and will be strictly increasing
 when restricted to
$[-2,0]$ and when restricted to $[1,2]$.
All the action will occur in $[0,1]$.
The only critical points of $f$ will be
the points of $A$.
The function will map $[0,1]$ onto itself,
with $f(0)=0$ and $f(1)=1$.
The image $f(A)\subset[0,1]$ will be another
Cantor-type set $D$.  For
each $y\in D$, the preimage $\preimy$
will consist of a Cantor-type subset $A_y$ of $A$,
together with a set of isolated points.
Each set $A_y$ will have the same Hausdorff
dimension 
$$
\gamma(r,s,\epsilon):= 
\frac{\log_s r}{1+\log_s\left(\frac{r}{1-\epsilon}\right)}
.
$$
Thus
$$I_{
\gamma(r,s,\epsilon)
}(f)=D,$$ and hence
$L(
\gamma(r,s,\epsilon)
,f) = \dim D$.
The Hausdorff dimension of $D$ will be
$$\frac{1}{(k+\epsilon)\left\{
1+\log_s\left(\frac{r}{1-\epsilon}\right)
\right\}
}.
$$

\medskip
The construction involves the 
repetition at reducing scales of a single pattern.
It involves generations of nested rectangles. 
The graph $f$ traverses
each rectangle from bottom left to top right.
The first generation has the single rectangle
$[0,1]\times[0,1]$. Each rectangle of the
$n$-th generation contains $rs$ rectangles of
the next generation, arranged in $s$ rows of
$r$ congruent rectangles. The rectangles of
different generations are not similar, i.e.
they do not have the same proportions of height
to width.  In other words, the rescaling from
generation to generation is non-isotropic, non-conformal.
In fact the ratio of height to width
tends rapidly to zero as we progress through the
generations. The rectangles of a given generation
project vertically to pairwise-disjoint closed
intervals, but their horizontal projections 
overlap. More precisely, the horizontal projections
of each given row are identical, and the horizontal
projections of distinct rows are pairwise-disjoint.

\begin{center}
{%
\setlength{\fboxsep}{0pt}
\setlength{\fboxrule}{1pt}
\fbox{%
\includegraphics[width=0.9\textwidth]{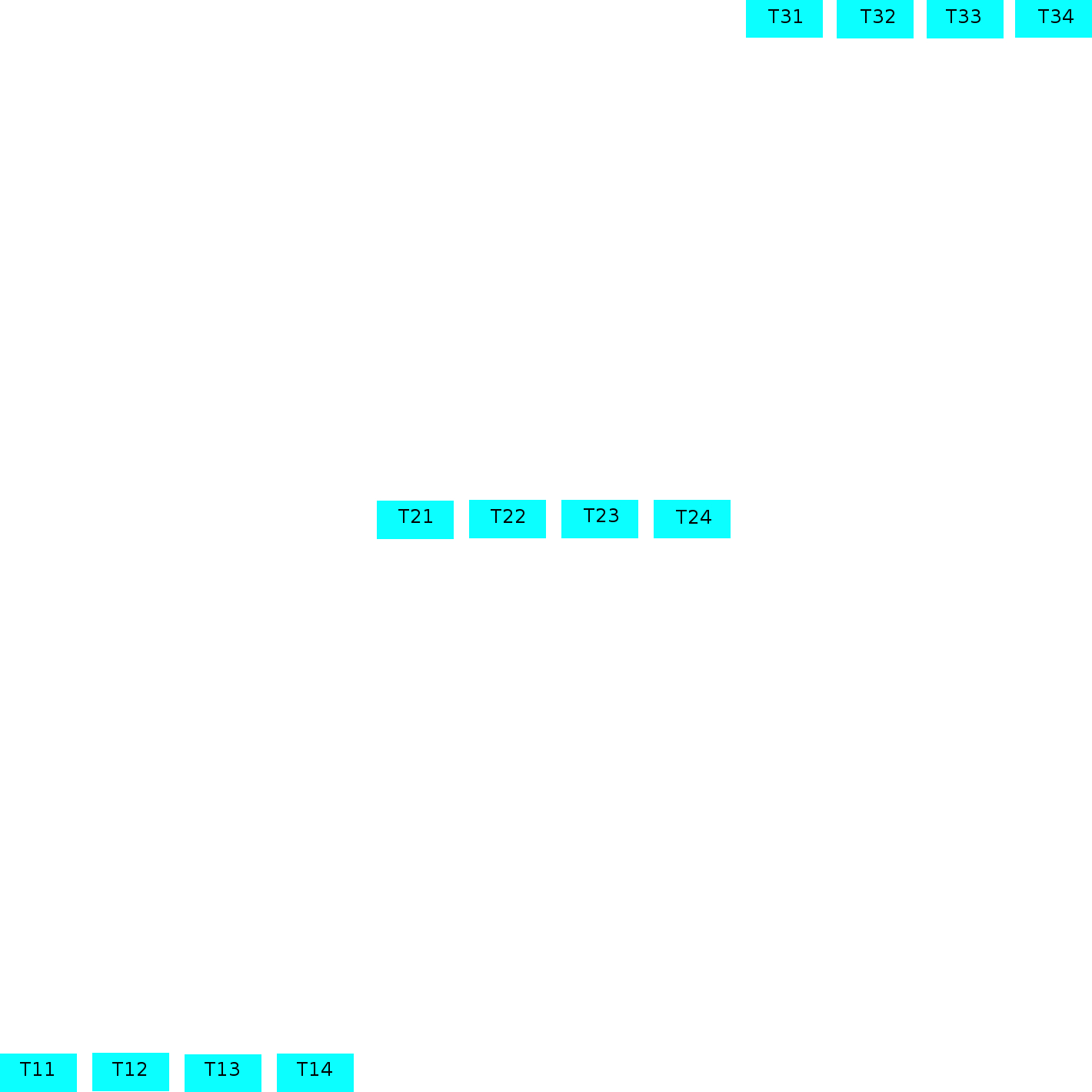}
\hskip-4pt
}%
}%
\\ Figure 1
\end{center}

Figure 1 shows the pattern of the 12 rectangles of
generation 1 for the case when $r=4$ and $s=3$.

We denote the vertical projection of a rectangle
$T$ by $T_1$, and its horizontal projection by
$T_2$. 

We use the following notation for features
of the $n$-th generation:
\begin{description}
\item[$R_n$] the family of all its rectangles,
\item[$c_n$] width of each rectangle, which is
to depend only on $n$,
\item[$a_n$] height of each rectangle, also depending
only on $n$,
\item[$P_n$] the family of all vertical projections 
$T_1$ of $T\in R_n$ (on $\R$),
\item[$Q_n$] the family of horizontal projections 
$T_2$ of $T\in R_n$,
\end{description}

If $n>1$ and $T\in R_{n-1}$, then we use the following
notation for features of the $n$-generation rectangles
contained in $T$:
\begin{description}
\item[$R_T$] the family of all these rectangles,
\item[$R_{Tm}$] the $m$-th row of $R_T$,
\item[$R_{Tmp}$] the $p$-th rectangle of $R_{Tm}$,
\item[$P_T$] the family of vertical projections
of $R_{T}$,
\item[$Q_T$] the family of horizontal projections of $R_T$,
\item[$d_n$] horizontal space between adjacent rectangles 
$P_{T}$, also depending only on $n$,
\item[$b_n$] vertical spacing between adjacent intervals of $Q_n$,
also depending only on $n$.
\end{description}

In Figure 1, the second-generation rectangle 
$R_{Tmp}$, where $T=[0,1]\times[0,1]$, is labelled
$Tmp$. 

The first rows $R_{T1}$ of rectangles of the
$n$-th generation are placed at the bottom
left of their containing rectangle $T\in R_{n-1}$,
and the last rows $R_{Ts}$ are placed at
the top right. The remaining rows are evenly
spaced vertically in $T$ (the space is $b_n$), 
and the horizontal gap between rows has the same
width $d_n$ 
as the horizontal gap between the rectangles
of each row $R_{Tm}$.

With this notation, we can define the Cantor-type sets
$A$ and $D$ as
$$ A:= \bigcap_{n=1}^\infty \bigcup P_n,
\quad 
D:= \bigcap_{n=1}^\infty \bigcup Q_n.
$$

For $n>1$, and $T\in R_{n-1}$,
the function $f$ is defined on the union of gaps
$T_1\setminus P_T$ in such a way that its
graph links the top right vertex of each $n$-th
generation rectangle
$R_{Tmp}$ (apart from the last one, $P_{Tsr}$)
to the bottom left vertex of the next to its right.
(If $p<r$, the next rectangle is $R_{Tm(p+1)}$,
whereas if $p=r$, the next rectangle is
$R_{T(m+1)1}$.) This linking is illustrated in
Figures 2 and 3. Figure 2 shows the link between
the two rectangles of the same row (specifically,
the first two rectangles of the first row of the
next generation inside rectangle $T$).
Figure 3 shows the link between the last rectangle 
of one row and the first rectangle of the next row.
 
\begin{center}
\includegraphics[width=\textwidth]{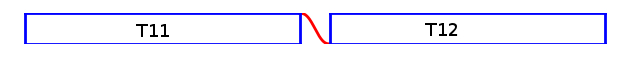}
\\Figure 2.
\end{center}
\begin{center}
\includegraphics[width=\textwidth]{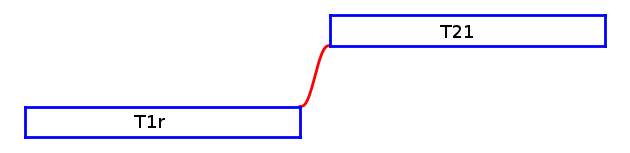}
\\Figure 3.
\end{center}

Each of these linking graphs
is defined by a formula of the form
$$ f(x):= \delta\cdot\phi\left(\frac{x-\mu}{\nu}\right)+\rho, $$
where the parameters $\delta$, $\mu$, $\nu$ and $\rho$
depend on $T$, $m$ and $p$, but the function
$\phi$ is always the same. 
To be more specific, $\phi:[0,1]\to[0,1]$
is a $C^k$ function that is flat of order $k$ at $0$ and $1$,
and decreases strictly monotonically from $1$ to
$0$.  A specific example of such a function is
given by
$$ \phi(x):= 1 - 
\frac{\int_0^x t^{k}(1-t)^{k}\,dt}{\int_0^1 t^{k}(1-t)^{k}\,dt}. 
$$
The parameters are chosen so that the top right
of $R_{Tmp}$ has coordinates $(\rho,\mu)$,
and the bottom left of the next rectangle
has coordinates $(\rho+\nu,\mu+\delta)$.
Thus $\delta=-a_n$ when $R_{Tmp}$ is not the last
rectangle of its row $R_{Tm}$, 
and $\delta=b_n$ when $R_{Tmp}$ is last
in its row, i.e when $m<s$ and $p=r$.
The parameter $\nu$, the width of the horizontal
gap between successive rectangles of the same generation,
is $d_n$, depending only on $n$.

If we set
$$ K:= \sup_{[0,1]} \phi^{(k)}, $$
then in the gap after $R_{Tmp}$ we have
$$ \sup |f^{(k)}| \le \frac{K\delta}{\nu^k}
\le \frac{K\max(a_n,b_n)}{d_n^k} < \frac{Ka_{n-1}}{d_n^k}.
$$
Thus as long as we ensure that $a_{n-1}/d_n^k\to0$ as 
$n\uparrow\infty$, we will end up with a $C^k$
function $f$, flat of order $k$ on $A$. 

(If this seems puzzling, note that at later generations
the horizontal gaps between rectangles will be far larger than
the height of the rectangles and even much larger
than the vertical gaps between rows of rectangles
than they are in the figures. The rectangles become very
very low, compared with their width.)

We now specify the specific values that determine
the function $f$ completely.  We take $a_1=c_1=1$, 
and then for $n>1$ we define
$$
 c_n :=  \frac{(1-\epsilon)c_{n-1}}{rs} 
= \left(\frac{1-\epsilon}{rs}\right)^{n-1}, 
$$
so that the intervals of each $P_T$ take up
all but a proportion $\epsilon$ of the
projection $T_1$.   The horizontal gap-length $d_n$
is then forced to be
$$ d_n = \frac{\epsilon c_{n-1}}{rs-1} 
> \frac{ \epsilon(1-\epsilon)^{n-2} }{(rs)^{n-1}}. $$
Next, we take
$$  a_n:= d_n^{k+\epsilon}, $$
and this ensures that $a_{n-1}/d_n^k\to0$, as required.
The value of the vertical gap-height $b_n$ is then forced to be
$$ b_n = \frac{a_{n-1} - sa_n}{s-1}. $$
 
The Cantor set $D$ is covered by the $s^n$ disjoint
horizontal projections of the family $R_{n+1}$, each of
length $a_{n+1}$, and for each $n$ we have
$$ s^n a_{n+1}^\beta = s^nd_{n+1}^{(k+\epsilon)\beta}
=
\left( 
\frac{\epsilon rs} {(1-\epsilon)(rs-1)}
\right)^{(k+\epsilon)\beta}
$$
(independently of $n$)
when
$$\beta = 
\frac{1}{
(k+\epsilon)
\left\{
1+\log_s\left(\frac{r}{1-\epsilon}\right)
\right\}
}
,$$
and we conclude that this value of $\beta$ is 
the Hausdorff dimension of $D$ (Compare Example
4.4, page 69, in Falconer's text \cite{Falconer},
of which our set $D$ is a special case).

\medskip
Now for $y\in D$, we consider $\preimy$. 
The preimage of $y$
consists of a countable subset of $[0,1]\setminus A$,
together with the Cantor set $A_y$, and this Cantor set
may be described as follows.
The rectangle $[0,1]\times[0,1]$ contributes the
interval $[0,1]$ as the first approximation to $A_y$.
A rectangle
$T\in R_n$ contributes an interval $T_1$ to the
$n$-th approximation to $A_y$ if and only if
$y\in T_2$. 
The set $A_y$
is the intersection of these approximations, and
each approximation covers $A_y$ by $r^n$ intervals
of length $c_n$.  These intervals may be
shrunk by a certain amount, and still cover,
because for each $T$, 
$$ \diam(A_y\cap T_1) < \diam(T_1). $$
In fact, $\inf A_y\cap T_1$ is the left-hand end
of the projection of the first rectangle of
some row of the next generation inside $T$,
whereas $\sup A_y\cap T_1$ lies somewhere inside
the projection of the last rectangle of the same
row.
However the ratio
$$ \lambda(r,s,\epsilon)
 := \frac{\diam(A_y\cap T_1)}{\diam(T_1)} $$
depends only on $r$,$s$ and $\epsilon$,
(and lies between $(r-1)/rs$ and $1/s$),
so $A_y$ is covered by $r^{n-1}$ intervals
of length $\lambda c_n$.  (In fact, a calculation
shows that, perhaps surprisingly, the ratio 
$$\lambda = \frac{r-1}{rs-1},$$  
does not depend on $\epsilon$.) 

For each $n\in\N$ we have that
$$r^{n-1} (\lambda c_{n})^\alpha = \lambda^\alpha
\left(
\frac{(1-\epsilon)^\alpha r^{1-\alpha}}{s^\alpha}
\right)^{n-1} = \lambda^\alpha,
$$ 
when
$$\alpha = \frac{\log_s r}{1+\log_s\left(\frac{r}{1-\epsilon}\right)},$$
and we conclude that 
this value of $\alpha$ is the
Hausdorff dimension of $A_y$ (--- the set $A_y$
is another case of 
Example 4.4 in \cite{Falconer}).

This completes the contruction, and the proof.

\subsection{Remarks.}
1. For $f\in C^k(E,\R)$, we have
$$ 0\le L(1,f) \le \inf_{0<\alpha<1} L(\alpha,f) = 0 $$
so the equality in Theorem \ref{T:1} also holds
when $\alpha=1$.

2. The construction we gave shows that the Hausdorff
dimension of the image of the critical set of
a $C^k$ function on $\R$ may be arbitrarily close
to $1/k$, i.e. that the bound mentioned in
Subsection \ref{SS:1-1} is sharp.

3. At the other extreme,
the construction also shows that the dimension of each one
of an uncountable
family of level sets may be arbitrarily
close to $1$.  

4. By tweaking the construction, keeping $r=2$
but increasing $s$ at each step, one obtains a
$C^k$ function having a critical set $A$ of
dimension zero, with $\dim f(A)=1/k$.

5. By, instead, keeping $s=2$ but increasing
$r$ at each step, one obtains a $C^k$ function
having an uncountable family of level
sets of Hausdorff dimension $1$.

\subsection{Acknowledgement}
We are grateful to Ian Short for valuable comments on a draft of this paper,
and to the referee for suggestions that improved the exposition. We also thank
M\'arton Elekes for pointing out two errata in the published version of the historical account in Subsection 1.5, second paragraph.  We have corrected these in the present version.

\end{document}